\documentclass[11pt]{article}
\usepackage{mathtools,amsfonts,amsmath,amssymb,amsthm,cite,hyperref}
\hypersetup{colorlinks=true,linkcolor=blue,citecolor=blue,urlcolor=blue}

\newcommand{\R}{\mathbb{R}}

\newcommand{\dimH}{\dim_{\mathrm H}}
\newcommand{\leT}{\leq_{\mathrm T}}
\newcommand{\T}{\mathrm T}

\newtheorem{thm}{Theorem}[section]
\newtheorem{defn}[thm]{Definition}

\newtheorem{lem}[thm]{Lemma}
\newtheorem{cor}[thm]{Corollary}
\newtheorem{prob}[thm]{Problem}

\numberwithin{equation}{section}

\begin{document}
	
	\title{From Erd\H{o}s Problem 1154 to a Zero--One Law for Turing Ideals}
    \author{Yi Wang \thanks{Qiuzhen College, Tsinghua University, Beijing, China.}}

	\maketitle
	
	\begin{abstract}
		Erd\H{o}s Problem 1154 asks whether every number in $[0,1]$ occurs as the
		Hausdorff dimension of a subring or subfield of $\R$. Motivated by this
		problem, Liang Yu asked whether the reals of an inner model can have
		Hausdorff dimension strictly between zero and one when their dimension is
		computed in an outer model. We prove a stronger result: if
		$\mathcal I\subseteq2^\omega$ is any Turing ideal, then
		$\dimH\mathcal I\in\{0,1\}$. Equivalently, the real-closed field whose
		reals have Turing degrees in $\mathcal I$ has Hausdorff dimension zero or
		one. The proof combines digit interleaving with the Furstenberg-set
		theorem of Orponen and Shmerkin.
	\end{abstract}
	
	\section{Introduction}\label{sec:intro}

    The following
	problem is listed as Problem 1154 in the Erd\H{o}s Problems database
	\cite{Bloom1154}; it originates in work of Erd\H{o}s and Volkmann and in a
	question recorded by Erd\H{o}s \cite{Erdos1979,ErdosVolkmann1966}.
	
	\begin{prob}[Erd\H{o}s Problem 1154]\label{prob:erdos}
		For every $\alpha\in[0,1]$, does there exist a subring or a subfield
		$F\subseteq\R$ such that
		\[
		\dimH F=\alpha?
		\]
	\end{prob}
	
	Erd\H{o}s and Volkmann proved that every $\alpha\in[0,1]$ is the Hausdorff
	dimension of an additive subgroup of $\R$ \cite{ErdosVolkmann1966}.  Requiring
	closure under multiplication is substantially more rigid.  Falconer proved
	that a Borel or analytic subring cannot have Hausdorff dimension strictly
	between $1/2$ and $1$ \cite{Falconer1984}.  Edgar and Miller subsequently
	obtained the complete zero--one theorem for analytic subrings: an analytic
	subring of $\R$ either has Hausdorff dimension zero or is all of $\R$
	\cite{EdgarMiller2003}.  On the other hand, assuming the Continuum
	Hypothesis, Mauldin constructed subfields of every prescribed Hausdorff
	dimension in $[0,1]$ \cite{Mauldin2016}. 
	
	Liang Yu proposed the following set-theoretic strengthening of the
	zero--one phenomenon \cite{YuMO}.
	
	\begin{prob}[Liang Yu]\label{prob:yu}
		Suppose that $M\subseteq N$ are models of ZFC, $M$ is transitive and
		definable in $N$, and the Hausdorff dimension of $\R^M$ is computed in $N$.
		Can one have
		\[
		0<\dimH^N(\R^M)<1?
		\]
		What happens in the special case $M=L$?
	\end{prob}
	
	We will answer Problem~\ref{prob:yu} negatively, and we will not use the
	definability of $M$ in $N$, equality of their ordinals, or any special
	property of $L$.
	
	The proof has three main steps.  First, to a Turing ideal
	$\mathcal I$ we associate the real-closed field
	\[
	F_{\mathcal I}
	=\{x\in\R:\deg_{\T}(x)\in\mathcal I\}.
	\]
	Using the binary evaluation map
	\[
	\beta(X)=\sum_{n=0}^{\infty}X(n)2^{-(n+1)},
	\]
	we show that $\mathcal I$ and $F_{\mathcal I}$ have the same Hausdorff
	dimension.
	
	Next, the Turing join identifies $\mathcal I$ with
	$\mathcal I\times\mathcal I$ equipped with the square of the product metric.
	Consequently, if $d=\dimH\mathcal I$, then
	\[
	\dimH(\mathcal I\times\mathcal I)=2d.
	\]
	
	This is the core of the proof; we use the closure of Turing ideals under joins.
	
	Finally, assuming $0<d<1$, the field structure of $F_{\mathcal I}$ produces
	a $2d$-dimensional family of affine lines, each intersecting
	$F_{\mathcal I}\times F_{\mathcal I}$ in a set of dimension $d$.
	The Orponen--Shmerkin theorem then forces this product set to have dimension
	strictly greater than $2d$, giving a contradiction.
	
	The argument applies to arbitrary Turing ideals and requires no
	definability or measurability assumptions.  The result for reals of
	transitive inner models follows as a special case.
	
	The main result is the following.
	
	\begin{thm}\label{thm:main}
		Let $\mathcal I\subseteq2^\omega$ be a Turing ideal. Then
		\[
		F_{\mathcal I}:=\{x\in\R:\deg_{\T}(x)\in\mathcal I\}
		\]
		is a real-closed subfield of $\R$, and
		\[
		\dimH\mathcal I=\dimH F_{\mathcal I}\in\{0,1\}.
		\]
	\end{thm}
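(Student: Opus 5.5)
The proof follows the three steps outlined in the introduction.

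\textbf{Step 1: $F_{\mathcal I}$ is a real-closed field and has the same dimension as $\mathcal I$.} Field operations on reals are computable relative to the binary expansions of the arguments. Hence $\deg_\T(x+y)$, $\deg_\T(xy)$ and $\deg_\T(1/x)$ are all $\le \deg_\T(x)\oplus\deg_\T(y)$, and $\mathcal I$ is closed under joins and downward closed. A root of an odd-degree polynomial with coefficients in $F_{\mathcal I}$ is computable from the coefficients, for instance by bisection with isolation of the roots. So $F_{\mathcal I}$ is real closed.

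For the dimension, $\beta$ is $1$-Lipschitz from $2^\omega$ with the metric $2^{-\min\{n:X(n)\ne Y(n)\}}$. Moreover $\beta$ is at most $2$-to-$1$, and the dyadic rationals are computable. Hence $F_{\mathcal I}\cap[0,1]=\beta(\mathcal I)$. Since $F_{\mathcal I}$ is invariant under integer translations, it is a countable union of translates of $\beta(\mathcal I)$. Then $\dimH F_{\mathcal I}\le\dimH\mathcal I$ follows from the Lipschitz bound. The reverse inequality follows because every interval of length $2^{-n}$ is covered by boundedly many $\beta$-images of cylinders of length $n$. So covers transfer back with only a constant loss.

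\textbf{Step 2: the square identity.} The join map $J(X,Y)=X\oplus Y$ is a bijection $\mathcal I\times\mathcal I\to\mathcal I$. Give $2^\omega\times2^\omega$ the max of the two Cantor metrics. Then $d(J(X,Y),J(X',Y'))\asymp d((X,Y),(X',Y'))^2$, so $J$ is bi-H\"older with exponents $2$ and $1/2$ up to constants. It follows that $\dimH(\mathcal I\times\mathcal I)=2\dimH\mathcal I$, computed in the product metric.

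The key point is that this passes to $\R^2$. The map $(X,Y)\mapsto(\beta X,\beta Y)$ sends $\mathcal I\times\mathcal I$ onto $(F_{\mathcal I}\cap[0,1])^2$. The same covering argument as in Step 1, using dyadic squares, shows $\dimH(F_{\mathcal I}\times F_{\mathcal I})=2d$ exactly, where $d=\dimH F_{\mathcal I}$. This is the crucial input: arbitrary sets can have product dimension strictly larger than $2d$, and here we get equality.

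\textbf{Step 3: contradiction via Furstenberg sets.} Suppose $0<d<1$. Consider the lines $\ell_{a,b}=\{(x,ax+b)\}$ for $a,b\in F_{\mathcal I}$ with $a\ne0$. Because $F_{\mathcal I}$ is a field, each such line meets $F_{\mathcal I}^2$ in $\{(x,ax+b):x\in F_{\mathcal I}\}$. This set is a bi-Lipschitz image of $F_{\mathcal I}$, so it has dimension $d$. The parameter set of these lines is $F_{\mathcal I}^2$ minus a line, which has dimension $2d$ by Step 2. So $F_{\mathcal I}^2$ is an $(s,t)$-Furstenberg set with $s=d$ and $t=2d$.

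The Orponen--Shmerkin theorem gives, for $0<s<1$ and $s<t\le 2$, the bound $\dimH\ge 2s+\epsilon(s,t)$ with $\epsilon>0$. Here $t=2d>d=s$, so we get $\dimH F_{\mathcal I}^2>2d$, contradicting Step 2.

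The main obstacle is applying this theorem to a set with no measurability or regularity assumptions. I would handle it in two stages.

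\begin{itemize}
\item First use the $\delta$-discretized version of the theorem.
\item Then pass to subsets: pick $t'<2d$ and $s'<d$ close to them, and use Frostman-type lower-dimension witnesses. A compact subset of a non-Borel set need not have dimension close to the set's dimension, so instead I would use the version of the theorem for arbitrary sets, which holds because Hausdorff dimension bounds reduce to discretized statements via covering arguments.
\end{itemize}

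If that version is not directly available, I would prove the non-Borel case by hand. Fix a near-optimal cover of $F_{\mathcal I}^2$ at scale $\delta$. For a $t$-dimensional family of lines, each line carries a $\delta$-separated $s$-dimensional subset of $F_{\mathcal I}$ in the sense of counting, and the discretized Orponen--Shmerkin incidence bound then applies to this configuration.

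The cases $d\in\{0,1\}$ need no argument, which gives $\dimH\mathcal I=\dimH F_{\mathcal I}\in\{0,1\}$.
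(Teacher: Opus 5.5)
Your proposal is correct and follows the paper's proof essentially step for step: real-closedness from relative computability, the $\beta$-transfer of dimension, the join map as a bi-Lipschitz map from the squared max-product metric giving $\dimH(F_{\mathcal I}\times F_{\mathcal I})=2d$, and Orponen--Shmerkin with $s=d$, $t=2d$ (dropping the lines with $a=0$ is harmless but unnecessary). The paper simply cites the Furstenberg-set theorem in a form with no measurability hypotheses, which is exactly your main line, so the discretized ``by hand'' fallback you sketch is not needed.
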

	
	As an immediate consequence, if $M\subseteq N$ are transitive models of
	ZFC, then $N$ computes $\dimH^N(\R^M)$ to be either zero or one.
	
	The paper is organized as follows. Section~\ref{sec:preliminaries} recalls
	Turing ideals, binary coding, and the Furstenberg-set theorem used below.
	Section~\ref{sec:proof} proves Theorem~\ref{thm:main} and its inner-model
	corollary.
	
	\section{Preliminaries}\label{sec:preliminaries}
	
	For $X,Y\in2^\omega$, write $X\leT Y$ if $X$ is Turing computable from
	$Y$. The Turing join is defined by
	\[
	(X\oplus Y)(2n)=X(n),\qquad
	(X\oplus Y)(2n+1)=Y(n).
	\]
	
	\begin{defn}
		A nonempty set $\mathcal I\subseteq2^\omega$ is a \emph{Turing ideal} if
		\begin{align*}
			X\in\mathcal I\text{ and }Y\leT X&\Longrightarrow Y\in\mathcal I,\\
			X,Y\in\mathcal I&\Longrightarrow X\oplus Y\in\mathcal I.
		\end{align*}
	\end{defn}
	
	Equip $2^\omega$ with the metric
	\[
	\rho(X,Y)=
	\begin{cases}0,&X=Y,\\
		2^{-\min\{n:X(n)\ne Y(n)\}},&X\ne Y.
	\end{cases}
	\]
	For an oracle $A\in2^\omega$, let $\R_A$ be the field of all
	$A$-computable real numbers. Put
	\[
	F_{\mathcal I}=\bigcup_{A\in\mathcal I}\R_A.
	\]
	
    For the dimension argument we use the following theorem.  Families of
    nonvertical lines are parametrized by point--line duality,
    \[
    \mathbf D(a,b)
    =\ell_{a,b}
    :=\{(x,y)\in\R^2:y=ax+b\}.
    \]
    Thus the Hausdorff dimension of $\mathbf D(P)$ is the Hausdorff dimension of
    the parameter set $P\subseteq\R^2$.
	
	\begin{thm}[Orponen--Shmerkin~\cite{OrponenShmerkin2023}]\label{thm:OS}
		Let $0<s<1$ and $s<t\le2$. Suppose that $K\subseteq\R^2$ and that
		$\mathcal L$ is a family of affine lines satisfying
		\[
		\dimH\mathcal L\ge t,
		\qquad \dimH(K\cap\ell)\ge s\quad(\ell\in\mathcal L).
		\]
		Then there exists $\varepsilon(s,t)>0$ such that
		\[
		\dimH K\ge2s+\varepsilon(s,t).
		\]
		No measurability or definability assumption is required.
	\end{thm}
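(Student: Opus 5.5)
The plan is to deduce this statement from its $\delta$-discretized counterpart, the Furstenberg-set estimate for $(\delta,s)$- and $(\delta,t)$-sets, and to recover that estimate from a discretized projection theorem through a multiscale decomposition. I first reduce to $t\le 2s$ by shrinking $t$; this is harmless, since a larger $t$ only strengthens the hypothesis. I then reduce to arbitrary sets without using Frostman measures on $K$ or $\mathcal L$, which need not be Borel. Two facts make this work. First, $\mathcal H^u_\infty$ is comparable to the dyadic net content. Second, a set $E$ with $\mathcal H^u_\infty(E)>0$ contains, at every scale $\delta$ in a set of scales with positive upper density, a finite $(\delta,u)$-set of dyadic cells of size $\gtrsim\delta^{-u+\eta}$. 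This is a content version of Frostman's lemma, proved by the usual greedy top-down construction on the dyadic tree. It uses only covers, so it needs no measurability.

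Concretely, suppose $\dimH K<2s+\varepsilon$. I fix a cover of $K$ by dyadic squares with $\sum r_i^{2s+\varepsilon}$ small. For each $\ell\in\mathcal L$, the set $K\cap\ell$ has positive $\mathcal H^{s-\eta}_\infty$-content, so by pigeonholing a single dyadic scale $\delta_\ell$ captures a definite fraction of that content on $\ell$. Since $\dimH\mathcal L\ge t$, a further pigeonholing over the countably many scales produces one scale $\delta$ and a subfamily $\mathcal L_\delta$ with $\mathcal H^{t-\eta}_\infty(\mathcal L_\delta)\gtrsim 1/\log(1/\delta)$. Applying the content-Frostman lemma to $\mathcal L_\delta$ and to each $K\cap\ell$ at scale $\delta$ gives a $(\delta,t)$-set of lines. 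Each of these lines meets the union $P_\delta$ of the covering squares of size $\approx\delta$ in a $(\delta,s)$-set. It therefore suffices to prove
\[
|P_\delta|_\delta\gtrsim \delta^{-2s-\varepsilon}.
\]
This contradicts the smallness of the cover once $\eta\ll\varepsilon$.

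The discretized bound is the heart of the matter, and I expect it to be the main obstacle. I would prove it by induction on scales. Suppose the bound failed at scale $\delta$. I would decompose $[\delta,1]$ into scale blocks on which the configuration is approximately self-similar, using uniformization and pigeonholing of branching numbers. On each block, the near-extremal incidence count forces, via point–line duality, a $(\Delta,s)$-set whose projections in a $t$-dimensional family of directions are all small. Bourgain's discretized projection theorem says that a $(\Delta,s)$-set with $0<s<1$ has a projection of $\Delta$-covering number $\gtrsim\Delta^{-s-\kappa}$ in all but a $(\Delta,\kappa)$-small set of directions. This rules out such configurations, provided $t>s$ guarantees that the direction set is not exceptional. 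That is exactly where the hypothesis $s<t$ enters. Concatenating the gains over the blocks yields $\varepsilon(s,t)>0$. The delicate point is the bookkeeping of the multiscale decomposition: the gains must be uniform across blocks even though the ``effective'' $s$ and $t$ vary between blocks. To control this, I would track branching functions and apply a Lipschitz-function lemma to find a positive proportion of blocks where the branching is non-degenerate.
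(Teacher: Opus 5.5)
The paper gives no proof of this statement. It quotes the theorem from Orponen--Shmerkin and adds the remark that no measurability is needed.

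\textbf{Your reduction.} Your first half is a sound justification of that remark. This matters, because $F_{\mathcal I}$ need not be Borel. Covers, countable subadditivity of Hausdorff content and a single-scale content version of Frostman's lemma do reduce the statement for arbitrary $K$ and $\mathcal L$ to a $\delta$-discretized incidence bound. A few repairs are needed, none serious.
\begin{itemize}
\item Before pigeonholing scales, pass by countable stability to a subfamily of dimension $>t-\eta$ on which $\mathcal H^{s-\eta}_\infty(K\cap\ell)$ is uniformly bounded below.
\item Summable weights give $\log^{-2}(1/\delta)$, not $\log^{-1}(1/\delta)$.
\item You need the content Frostman lemma at \emph{every} scale, since $\delta$ is dictated by the cover. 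It is true there; a positive-density set of good scales would not suffice.
\item The discretized bound must tolerate $\delta^{-\eta}$ losses in the non-concentration constants.
\end{itemize}

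\textbf{The gap: the Bourgain step.} The genuine gap is in the discretized bound, which carries all the content. The form of Bourgain's projection theorem you quote is false. A $(\Delta,s)$-set with about $\Delta^{-s}$ elements has every projection of size at most $\Delta^{-s}$, never $\Delta^{-s-\kappa}$. Bourgain's theorem gains only over \emph{half} the dimension. If $A$ has $\Delta$-covering number about $\Delta^{-\alpha}$, with $0<\alpha<2$ and weak non-concentration, then $|\pi_e(A)|_\Delta\ge\Delta^{-\alpha/2-\eta}$ for all $e$ outside a direction set that is negligible for every $\kappa$-Frostman measure.

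So the configuration you describe gives no contradiction.
\begin{itemize}
\item Read literally, it is either meaningless or harmless. Directions form a one-dimensional set, while $t$ may exceed $1$. And for a $(\Delta,s)$-set, Bourgain only forbids projections below $\Delta^{-s/2-\eta}$.
\item Read with the roles corrected, it says the dual $(\Delta,t)$-set of lines has projections of size $\lesssim\Delta^{-s}$ along an $s$-dimensional set of directions. This is what product configurations such as $F\times F$ produce. Bourgain with $\alpha=t$ then gives only $\Delta^{-t/2-\eta}$, which beats $\Delta^{-s}$ only if $t/2+\eta>s$.
\end{itemize}
After your reduction to $t\le2s$, this leaves essentially only the endpoint $t=2s$. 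Incidentally, that endpoint in product form is the only case the paper uses. For $s<t<2s$ you would need an $\varepsilon$-improvement of Kaufman's exceptional-set bound, which is itself the product case of the theorem you are proving.

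\textbf{The multiscale step.} This is not mere bookkeeping either. In a block where the effective line dimension $t_j$ is below the effective point dimension $s_j$, the elementary bound is only $s_j+t_j<2s_j$. So concatenating block-wise bounds can fall short of $2s$ before any gain is sought. For example, take $s\le1/2$ and $s<t<2s$, with $s_j=2s$ on the coarser half of the scales, $s_j=0$ on the finer half, and $t_j\equiv t$. Concatenation then yields only $s+t/2<2s$.

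What is needed is to produce, at a positive proportion of scales, a configuration to which the correct Bourgain theorem genuinely applies, while losing nothing at the remaining scales. Such a configuration is a set whose covering number is roughly the square of its projections, in a positive-dimensional, non-concentrated family of directions, with the non-concentration Bourgain requires. This is the substance of the Orponen--Shmerkin argument. Your proposal defers it rather than supplying it, so the discretized estimate, and with it the statement, remains unproved.
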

	
	\section{Proof of the main theorem}\label{sec:proof}
	
	\begin{lem}\label{lem:field}
		For every Turing ideal $\mathcal I$, the set $F_{\mathcal I}$ is a
		real-closed subfield of $\R$.
	\end{lem}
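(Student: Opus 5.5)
We prove that $F_{\mathcal I}=\bigcup_{A\in\mathcal I}\R_A$ is a directed union of real-closed fields. The first step is the relativization of the classical fact that the computable reals form a real-closed field. Fix an oracle $A$ and call $x$ $A$-computable if there is an $A$-computable sequence of rationals $q_n$ with $|x-q_n|\le2^{-n}$. This is equivalent to the binary expansion of $x$ being $A$-computable, which matches the degree-based description $\{x:\deg_\T(x)\in\mathcal I\}$ in Theorem~\ref{thm:main}. For the binary case one must treat dyadic rationals separately; they are computable outright, so the choice of expansion does not matter. Standard interval arithmetic then shows that $\R_A$ is closed under $+$, $-$ and $\times$. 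For $x^{-1}$ with $x\ne0$, we search $A$-computably for an $n$ with $|q_n|>2^{1-n}$, which bounds $|x|$ away from zero, and then invert the approximations.

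For real closure we take a polynomial $p(t)=\sum_{i\le k}c_it^i$ with coefficients in $\R_A$ and odd degree, or more generally a real root $r$ of a nonzero such polynomial. Here we use the standard argument that the real roots of $p$ are $A$-computable. Replace $p$ by its squarefree part: $\gcd(p,p')$ is computed by the Euclidean algorithm, but equality-to-zero tests on coefficients are not decidable. To avoid this, we nonuniformly fix the finitely many facts needed, namely which coefficients vanish and the degrees of the remainders. Since we only need each individual root to be $A$-computable, nonuniformity is harmless. Each real root $r$ of the squarefree polynomial is isolated in a rational interval $[a,b]$ on which $p$ changes sign and has no other root. Bisection then computes $r$ relative to $A$, where we use the fact that $p(q)\ne0$ for rational $q\ne r$ to decide sign changes, or equivalently we search for rational points with certified nonzero sign. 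Hence $\R_A$ is real closed. Equivalently, we can argue that the real algebraic closure of a field computable in $A$ is again computable in $A$.

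The second step uses the ideal axioms. $F_{\mathcal I}$ is nonempty and contains $\mathbb{Q}$, because $\mathcal I$ is nonempty and downward closed and therefore contains the computable sets. Given $x\in\R_A$ and $y\in\R_B$ with $A,B\in\mathcal I$, both lie in $\R_{A\oplus B}$, and $A\oplus B\in\mathcal I$. So the family $\{\R_A:A\in\mathcal I\}$ is directed under inclusion, and $x\pm y$, $xy$ and $x/y$ all lie in $\R_{A\oplus B}\subseteq F_{\mathcal I}$. A polynomial with finitely many coefficients from $F_{\mathcal I}$ has all its coefficients in a single $\R_C$ with $C\in\mathcal I$ (iterate the join), and its real roots then lie in $\R_C$ by the first step. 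A directed union of real-closed fields is real closed, and this gives the lemma.

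We also verify the identification $F_{\mathcal I}=\{x:\deg_\T(x)\in\mathcal I\}$. If $x\in\R_A$, then the binary expansion $X$ of $x$ satisfies $X\leT A$, so $X\in\mathcal I$ by downward closure. Conversely, $x\in\R_X$ for its own expansion $X$. The main obstacle is the real-closedness step, specifically the non-decidability of equality in $\R_A$. We handle it as above, by nonuniformly fixing the finitely many algebraic facts about the given polynomial.
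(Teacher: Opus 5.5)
Your proof is correct and takes the same route as the paper: $F_{\mathcal I}$ is the directed union of the real-closed fields $\R_A$ ($A\in\mathcal I$), directed via $\R_A\cup\R_B\subseteq\R_{A\oplus B}$, and any polynomial has all its coefficients in a single $\R_C$. Beyond what the paper writes, you prove the relativized fact that each $\R_A$ is real closed, handling undecidable equality nonuniformly, and you check the identification with $\{x:\deg_{\T}(x)\in\mathcal I\}$; the paper simply asserts both.
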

	
	\begin{proof}
		The family $\{\R_A:A\in\mathcal I\}$ is directed: if $A,B\in\mathcal I$,
		then $A\oplus B\in\mathcal I$ and
		$\R_A\cup\R_B\subseteq\R_{A\oplus B}$. Every $\R_A$ is real closed.
		A directed union of real-closed subfields of $\R$ is real closed, since the
		finitely many coefficients of any polynomial lie in one member of the
		directed family, and its real algebraic roots lie there as well.
	\end{proof}
	
	Define $\beta:2^\omega\to[0,1]$ by
	\[
	\beta(X)=\sum_{n=0}^{\infty}X(n)2^{-(n+1)}.
	\]
	On $(2^\omega)^2$ use the maximum product metric
	\[
	\rho_2((X_0,X_1),(Y_0,Y_1))
	=\max\{\rho(X_0,Y_0),\rho(X_1,Y_1)\}.
	\]
	
	\begin{lem}\label{lem:binary}
		For $B\subseteq2^\omega$ and $C\subseteq(2^\omega)^2$,
		\[
		\dimH\beta[B]=\dimH B,
		\qquad
		\dimH(\beta\times\beta)[C]=\dimH C.
		\]
	\end{lem}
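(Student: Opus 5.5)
The plan is to show that $\beta$ (and $\beta\times\beta$) is Lipschitz in one direction and "Lipschitz up to a bounded number of pieces" in the other. The second part amounts to comparing covers by dyadic cylinders with covers by dyadic intervals.

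\emph{Upper bound.} For $X\ne Y$ agreeing on the first $n$ bits, where $n=\min\{k:X(k)\ne Y(k)\}$, we have $|\beta(X)-\beta(Y)|\le\sum_{k\ge n}2^{-(k+1)}=2^{-n}=\rho(X,Y)$. So $\beta$ is $1$-Lipschitz from $(2^\omega,\rho)$ to $\R$. Likewise $\beta\times\beta$ is $1$-Lipschitz (in fact $\sqrt2$-Lipschitz for the Euclidean metric) from $((2^\omega)^2,\rho_2)$ to $\R^2$. Since Lipschitz maps do not increase Hausdorff dimension, $\dimH\beta[B]\le\dimH B$ and $\dimH(\beta\times\beta)[C]\le\dimH C$.

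\emph{Lower bound.} Fix $s>\dimH\beta[B]$ and $\delta>0$, and take a cover of $\beta[B]$ by sets $U_i$ with $\sum|U_i|^s<\delta$. For each $U_i$, let $n_i$ be such that $2^{-n_i-1}<|U_i|\le2^{-n_i}$ (the case $|U_i|=0$ is handled with arbitrarily small replacements). Then $U_i$ meets at most $2$ closed dyadic intervals of length $2^{-n_i}$. The preimage under $\beta$ of a closed dyadic interval $[k2^{-n},(k+1)2^{-n}]$ is the cylinder $[\sigma]$ of strings with prefix $\sigma$, $|\sigma|=n$, together with at most two extra points coming from the dyadic endpoints, which have two binary expansions. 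Each such cylinder has $\rho$-diameter at most $2^{-n}$, and single points have diameter $0$.

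It follows that $B\subseteq\bigcup_i\beta^{-1}[U_i]$ is covered by at most $2$ cylinders per $i$, each of diameter $\le2|U_i|$, plus countably many points. Each point can be covered by a cylinder of arbitrarily small diameter, so the points contribute total $s$-mass as small as we like. Hence $\mathcal H^s_{2\delta'}(B)\lesssim 2\cdot2^s\sum|U_i|^s$, which gives $\mathcal H^s(B)=0$ when $\mathcal H^s(\beta[B])=0$. Therefore $\dimH B\le s$.

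In the plane the same argument applies to dyadic squares. A set $U$ of diameter at most $2^{-n}$ meets at most $4$ closed dyadic squares of side $2^{-n}$. The $(\beta\times\beta)$-preimage of such a square is a product of two cylinders of length $n$, which has $\rho_2$-diameter at most $2^{-n}$, together with a countable exceptional set coming from the dyadic rationals in either coordinate.

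\emph{Main obstacle.} The step needing care is this non-injectivity at dyadic rationals in the planar case. There the exceptional preimage is no longer countable: it has the form $\{X\}\times 2^\omega$ with $X$ ranging over eventually-constant sequences. I would handle it by covering the preimage of each closed dyadic square directly by at most $2\times2$ products of cylinders of length $n$, using two adjacent cylinders per coordinate. One of these is the "left/right expansion" cylinder of the neighbouring interval. This works because $\beta^{-1}[[k2^{-n},(k+1)2^{-n}]]$ is contained in the union of the cylinders for $k$ and for $k\pm1$, all of length $n$.

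This only changes the constant in front of $\sum|U_i|^s$ (to at most $9\cdot 2^s$ in the plane), so the covering comparison goes through. It also makes the countable-points bookkeeping in the one-dimensional case unnecessary.
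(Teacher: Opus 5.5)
Your proof is correct and follows the paper's route: $\beta$ and $\beta\times\beta$ are Lipschitz, and conversely the preimage of a set of diameter at most $2^{-n}$ is covered by boundedly many length-$n$ cylinders (products of cylinders in the plane), which is the paper's ``three cylinders / nine squares'' argument, with your neighbouring-cylinder trick absorbing the dyadic ambiguity (you are right that in the plane this ambiguity is uncountable, so it is the covering, not the paper's closing remark about countability, that handles it). Two counts are off: your counts $2$ and $4$ can fail when $|U_i|=2^{-n_i}$ exactly, and using cylinders $k,k\pm1$ gives $3\times3$ rather than $2\times2$ products; since any bounded count suffices, this only changes the constant.
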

	
	\begin{proof}
		The map $\beta$ is Lipschitz. Conversely, the inverse image of an interval
		of diameter $\delta$ is covered by at most three binary cylinders whose
		diameters are less than $2\delta$. Pulling back interval covers therefore
		preserves Hausdorff dimension. The product assertion is identical, using at
		most nine dyadic squares. The ambiguity of binary expansions is countable
		and does not affect Hausdorff dimension.
	\end{proof}
	
	\begin{lem}\label{lem:transfer}
		For every Turing ideal $\mathcal I$,
		\[
		\dimH\mathcal I=\dimH F_{\mathcal I},
		\qquad
		\dimH(\mathcal I\times\mathcal I)
		=\dimH(F_{\mathcal I}\times F_{\mathcal I}).
		\]
	\end{lem}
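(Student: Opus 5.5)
The plan is to compare $\beta[\mathcal I]$ with $F_{\mathcal I}\cap[0,1]$ and then apply Lemma~\ref{lem:binary}. The main claim is that these two sets coincide up to a countable set.

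First, if $X\in\mathcal I$, then $\beta(X)$ is computable from $X$. Its binary digits are given outright, so rational approximations are $X$-computable. Hence $\beta(X)\in\R_X\subseteq F_{\mathcal I}$, and so $\beta[\mathcal I]\subseteq F_{\mathcal I}\cap[0,1]$.

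Conversely, let $x\in F_{\mathcal I}\cap[0,1]$, say $x\in\R_A$ with $A\in\mathcal I$. There are two cases.
\begin{itemize}
\item If $x$ is not a dyadic rational, then its binary expansion $X$ is unique, and $X$ is $A$-computable. To decide the $n$th digit, approximate $x$ until the approximation separates $x$ from the finitely many relevant dyadic points $k2^{-n}$; this terminates because $x$ is never equal to one of them. Then $X\leT A$, so $X\in\mathcal I$ by downward closure, and $x=\beta(X)$.
\item If $x$ is a dyadic rational, then $x$ belongs to a countable set.
\end{itemize}
Therefore $F_{\mathcal I}\cap[0,1]$ differs from $\beta[\mathcal I]$ only by countably many points. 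Countable sets do not affect Hausdorff dimension, and neither do finitely many unions, so $\dimH(F_{\mathcal I}\cap[0,1])=\dimH\beta[\mathcal I]=\dimH\mathcal I$ by Lemma~\ref{lem:binary}.

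To pass from $[0,1]$ to all of $F_{\mathcal I}$, note that $F_{\mathcal I}$ is a field containing $\mathbb Q$ (Lemma~\ref{lem:field}). So $F_{\mathcal I}=\bigcup_{m\in\mathbb Z}\bigl((F_{\mathcal I}\cap[0,1])+m\bigr)$. Translation by an integer maps the field to itself and is an isometry, so each piece has the same dimension. Countable stability of Hausdorff dimension then gives $\dimH F_{\mathcal I}=\dimH\mathcal I$.

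The product statement follows the same pattern. If $(X,Y)\in\mathcal I\times\mathcal I$, then $(\beta X,\beta Y)\in F_{\mathcal I}^2\cap[0,1]^2$. Conversely, take $(x,y)\in F_{\mathcal I}^2\cap[0,1]^2$ with neither coordinate a dyadic rational. By the argument above, its binary expansions $X$ and $Y$ lie in $\mathcal I$. The excluded points lie in the union of countably many lines $\{x=q\}\cup\{y=q\}$ with $q$ dyadic.

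The main obstacle is exactly here: unlike the one-dimensional case, these exceptional points do not form a countable set. The exceptional part is $(D\times F_{\mathcal I})\cup(F_{\mathcal I}\times D)$, where $D$ is the set of dyadic rationals. To handle it, note that each slice $\{q\}\times(F_{\mathcal I}\cap[0,1])$ is an isometric copy of $F_{\mathcal I}\cap[0,1]$. Its dimension is therefore $\dimH\mathcal I$, which is at most $\dimH(\mathcal I\times\mathcal I)$ because $\mathcal I\times\mathcal I$ contains a copy of $\mathcal I$ with one coordinate fixed (for example, the computable real $0^\omega\in\mathcal I$). Hence adding or removing these countably many slices does not change the dimension, by countable stability.

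With that settled, $\dimH(F_{\mathcal I}^2\cap[0,1]^2)=\dimH(\beta\times\beta)[\mathcal I\times\mathcal I]=\dimH(\mathcal I\times\mathcal I)$, using the product part of Lemma~\ref{lem:binary}. Finally, integer translates of the unit square cover $\R^2$, and $F_{\mathcal I}\times F_{\mathcal I}$ is invariant under translation by $\mathbb Z^2$. Countable stability therefore finishes the proof.
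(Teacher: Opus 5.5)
Your proof is correct and follows the paper's route: compare $\beta[\mathcal I]$ with $F_{\mathcal I}\cap[0,1]$, apply Lemma~\ref{lem:binary}, and then pass to all of $F_{\mathcal I}$ (and to $F_{\mathcal I}\times F_{\mathcal I}$) using integer translates and countable stability. The one difference is that the dyadic-rational detour is unnecessary: a dyadic rational has an eventually constant, hence computable, binary expansion, which lies in $\mathcal I$ by downward closure, so the paper obtains the exact equalities $\beta[\mathcal I]=F_{\mathcal I}\cap[0,1]$ and $(\beta\times\beta)[\mathcal I\times\mathcal I]=(F_{\mathcal I}\cap[0,1])^2$, and your product-case ``main obstacle'' never arises (although your slice argument for it is valid).
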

	
	\begin{proof}
		Every $X\in\mathcal I$ computes $\beta(X)$, while every $A$-computable
		real in $[0,1]$ has an $A$-computable binary expansion. Downward closure gives
		\[
		\beta[\mathcal I]=F_{\mathcal I}\cap[0,1].
		\]
		Since $F_{\mathcal I}$ contains $\mathbb Z$, it is a countable union of
		translates of its intersection with $[0,1]$. The product case is analogous.
		The conclusion follows from Lemma~\ref{lem:binary}, translation invariance,
		and countable stability of Hausdorff dimension.
	\end{proof}
	
	\begin{lem}\label{lem:square}
		If $d=\dimH\mathcal I$, then
		\[
		\dimH(\mathcal I\times\mathcal I)=2d.
		\]
	\end{lem}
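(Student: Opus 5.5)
The plan is to show that the join map
\[
J:(2^\omega)^2\to2^\omega,\qquad J(X,Y)=X\oplus Y,
\]
restricts to a bijection $\mathcal I\times\mathcal I\to\mathcal I$, and then to compare the metrics $\rho$ and $\rho_2$ through $J$. Closure of $\mathcal I$ under joins gives $J[\mathcal I\times\mathcal I]\subseteq\mathcal I$. Conversely, if $Z\in\mathcal I$, then its even and odd parts $Z_0,Z_1$ are computable from $Z$, so they lie in $\mathcal I$ by downward closure, and $Z=J(Z_0,Z_1)$. Since $J$ is a bijection of $(2^\omega)^2$ onto $2^\omega$, we obtain $J[\mathcal I\times\mathcal I]=\mathcal I$ exactly.

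Next, compute the metric distortion. Suppose $\rho_2((X_0,X_1),(Y_0,Y_1))=2^{-m}$, so that both coordinates agree below $m$ and one of them differs at $m$. Then $X_0\oplus X_1$ and $Y_0\oplus Y_1$ agree on all positions below $2m$ and differ at position $2m$ or $2m+1$. Hence
\[
\rho(J(X),J(Y))\in\{2^{-2m},2^{-2m-1}\},
\]
and therefore
\[
\tfrac12\,\rho_2(X,Y)^2\le\rho(J(X),J(Y))\le\rho_2(X,Y)^2.
\]
So $J$ is bi-Lipschitz from $((2^\omega)^2,\rho_2^2)$ onto $(2^\omega,\rho)$.

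It remains to transfer dimensions. Bi-Lipschitz maps preserve Hausdorff dimension, and replacing a metric by its square halves the dimension: a cover by sets of $\rho_2$-diameter $r_i$ is a cover by sets of $\rho_2^2$-diameter $r_i^2$, and $\sum (r_i^2)^{s}=\sum r_i^{2s}$. Consequently
\[
\dim_{\rho_2^2}(\mathcal I\times\mathcal I)=\tfrac12\dimH(\mathcal I\times\mathcal I),
\]
where the right-hand side uses $\rho_2$. Combining this with $\dim_{\rho}\mathcal I=d$ gives $\dimH(\mathcal I\times\mathcal I)=2d$.

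The main obstacle is being careful that the metric-squaring identity holds for arbitrary sets, with no regularity of $\mathcal I$ assumed. It does, because it is a direct equality of the $s$-dimensional outer measures $\mathcal H^{s}_{\rho_2^2}=\mathcal H^{2s}_{\rho_2}$, obtained by reindexing covers, and the $\delta$-limits correspond since $\delta\mapsto\delta^2$ is monotone.
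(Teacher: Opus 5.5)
Your proposal is correct and follows the paper's proof essentially step for step. You use the join bijection $J:\mathcal I\times\mathcal I\to\mathcal I$, the distortion bound $\tfrac12\rho_2^2\le\rho\circ J\le\rho_2^2$, and the fact that squaring the metric halves Hausdorff dimension, and you additionally spell out details the paper leaves implicit: surjectivity via even/odd parts and downward closure, and the identity $\mathcal H^{s}_{\rho_2^2}=\mathcal H^{2s}_{\rho_2}$ for arbitrary sets.
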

	
	\begin{proof}
		The join map $J(X,Y)=X\oplus Y$ is a bijection from
		$\mathcal I\times\mathcal I$ onto $\mathcal I$. If $D=\rho_2$, then
		\[
		\frac12D(u,v)^2\le\rho(Ju,Jv)\le D(u,v)^2.
		\]
		Thus $J$ is bi-Lipschitz from $(\mathcal I\times\mathcal I,D^2)$ onto
		$(\mathcal I,\rho)$. Since replacing a metric by its square divides
		Hausdorff dimension by two, the result follows.
	\end{proof}
	
	\begin{proof}[Proof of Theorem~\ref{thm:main}]
		The field assertion follows from Lemma~\ref{lem:field}, and the equality of
		dimensions follows from Lemma~\ref{lem:transfer}. Let
		$F=F_{\mathcal I}$ and $d=\dimH F$. Suppose that $0<d<1$ and set
		$E=F\times F$. By Lemmas~\ref{lem:transfer} and~\ref{lem:square},
		$\dimH E=2d$.
		
		For $a,b\in F$, let $\ell_{a,b}=\{(x,y):y=ax+b\}$ and let
		$\mathcal L=\{\ell_{a,b}:a,b\in F\}$. Point--line duality gives
		$\dimH\mathcal L=2d$. Moreover,
		\[
		E\cap\ell_{a,b}=\{(x,ax+b):x\in F\},
		\]
		so $\dimH(E\cap\ell_{a,b})=d$. Applying Theorem~\ref{thm:OS} with
		$s=d$ and $t=2d$ yields
		\[
		\dimH E\ge2d+\varepsilon(d,2d)>2d,
		\]
		contradicting $\dimH E=2d$. Hence $d\in\{0,1\}$.
	\end{proof}
	
	\begin{cor}\label{cor:yu}
		If $M\subseteq N$ are transitive models of ZFC, then
		\[
		\dimH^N(\R^M)\in\{0,1\}.
		\]
	\end{cor}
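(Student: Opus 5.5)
The corollary should follow directly from Theorem~\ref{thm:main}, applied inside $N$. Work in $N$ and let $\mathcal I=2^\omega\cap M$, the set of binary sequences that belong to $M$. I will check three things: that $\mathcal I$ is a Turing ideal from the point of view of $N$, that $F_{\mathcal I}=\R^M$ (after identifying reals of $M$ with reals of $N$), and that Theorem~\ref{thm:main} is a theorem of ZFC, so it holds in $N$. Together these give $\dimH^N(\R^M)=\dimH^N F_{\mathcal I}\in\{0,1\}$.

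\emph{Step 1: $\mathcal I$ is a Turing ideal in $N$.} The set $\mathcal I$ is nonempty, since computable sequences lie in $M$. Since $M$ is transitive, $\omega\in M$, and every $X\in 2^\omega\cap M$ is literally the same function in $N$. Closure under joins holds because $X\oplus Y$ is defined from $X$ and $Y$ by a $\Delta_0$ recursion, and $M$ is a ZFC model. For downward closure, suppose $Y\leT X$ in $N$ via an index $e$. The computation $\Phi_e^X$ is arithmetical in $X$, so it is absolute between transitive models. Hence $M$ computes the same total function $Y$, and $Y\in M$.

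\emph{Step 2: $F_{\mathcal I}=\R^M$.} I will represent reals in both models via Cauchy sequences of rationals, or via Dedekind cuts coded as subsets of $\omega$. Transitivity makes these codes absolute, so each $x\in\R^M$ corresponds to a unique real of $N$, and this gives an embedding $\R^M\hookrightarrow\R^N$. If $x\in\R^M$, then $x$ is computable from its binary-expansion sequence, which lies in $M$; so $\deg_\T(x)\in\mathcal I$. Conversely, suppose $x\in\R^N$ is $A$-computable for some $A\in\mathcal I$. Then the Cauchy code $\Phi_e^A$ is already in $M$ by Step 1's absoluteness, so $x\in\R^M$.

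\emph{Step 3: relativize the theorem.} The proof of Theorem~\ref{thm:main} uses only ZFC facts: the lemmas above and the Orponen--Shmerkin theorem, which requires no regularity assumptions on $K$. So the theorem holds in $N$ for the $N$-set $\mathcal I$. The main obstacle is purely bookkeeping. $\mathcal I$ need not be definable in $N$, but it is a set in $N$ whenever $M$ is, and if $M$ is a class the argument is uniform in it. Beyond that, one only needs to make sure the identification of reals is explicit, and no deep step remains.
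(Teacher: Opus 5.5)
Your proposal is correct and follows the paper's proof. You take $\mathcal I=(2^\omega)^M$ inside $N$, check that it is a Turing ideal by absoluteness of oracle computations between transitive models, identify $F_{\mathcal I}$ with $\R^M$, and apply Theorem~\ref{thm:main} in $N$. Your worry about $\mathcal I$ being an element of $N$ is really a presupposition of the statement, which the paper also leaves implicit; it holds by Separation when $M\in N$ or $M$ is a class definable in $N$, and otherwise $\dimH^N(\R^M)$ would not make sense.
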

	
	\begin{proof}
		Work in $N$ and put $\mathcal I=(2^\omega)^M$. Transitivity and closure
		under recursive constructions show that $\mathcal I$ is a Turing ideal in
		$N$. Binary evaluation is absolute, so $F_{\mathcal I}=\R^M$. Theorem
		\ref{thm:main}, computed in $N$, gives the conclusion.
	\end{proof}

\end{document}